\newtheorem{defin}{}
\newtheorem{saetze}[defin]{}
\newtheorem{lemmas}[defin]{}
\newtheorem{folger}[defin]{}
\newtheorem{bemerk}[defin]{}
\newtheorem{exampl}[defin]{}
\newenvironment{theorem}  {\begin{saetze}\it {\bf Theorem:}}{\end{saetze}}
\newenvironment{lemma}    {\begin{lemmas}\it {\bf Lemma:}}{\end{lemmas}}
\newenvironment{exam}     {\begin{exampl}\it {\bf Example:}}{\end{exampl}}
\newenvironment{proof}    {{\it Proof}:}{{\hfill \fillbox \bigskip}}
\newcommand{\fillbox}{\mbox{$\bullet$}}
\newcommand{\ra}{\rightarrow}
\newcommand{\Ra}{\Rightarrow}
\newcommand{\ms}{\mapsto}
\newcommand{\N}{\mathbb N}
\newcommand{\F}{\mathbb F}
\newcommand{\Q}{\mathbb Q}
\newcommand{\C}{\mathbb C}
\newcommand{\ideal}[1]{(#1)} 
\newenvironment{items}{\begin{list}{$\alph{item})$}
{\labelwidth18pt \leftmargin18pt \topsep3pt \itemsep1pt \parsep0pt}}
{\end{list}}
\newcommand{\bulit}{\item[$\bullet$]}
\begin{document}

\title{Deciding if a variety forms an algebraic group}
\author{John Abbott and Bettina Eick}
\date{\today}
\maketitle

\begin{abstract}
Let $n$ be a positive integer and let $f_1, \ldots, f_r$ be polynomials in 
$n^2$ indeterminates over an algebraically closed field $K$. We describe an
algorithm to decide if the invertible matrices contained in the variety of
$f_1, \ldots, f_r$ form a subgroup of $GL(n,K)$; that is, we show how to 
decide if the polynomials $f_1, \ldots, f_r$ define a linear algebraic group.
\end{abstract}

\section{Introduction}

Let $n$ be a positive integer and let $K$ be a field.
A linear algebraic group is a subgroup of $GL(n, K)$ defined by polynomial 
equations.  More precisely, for a linear algebraic group $G$ there exist 
polynomials $f_1, \ldots, f_r \in K[x_1, \ldots, x_{n^2}]$ such that 
$G = \{ g \in GL(n, K) \mid f_i(g) = 0 \mbox{ for } 1 \leq i \leq r \}$, 
where $f_i(g)$ is $f_i$ evaluated at the $n^2$ entries of the matrix $g$.  
Some well-known examples of algebraic groups are the general linear, the 
special linear, the orthogonal and the symplectic groups. 

Linear algebraic groups are studied extensively in the literature. But there
are only a few algorithms known to compute with such groups. Membership 
testing is easy by definition, but other algorithmic problems are often
rather difficult to decide. 

The aim of this note is to describe an algorithm to decide if a set of 
polynomials $f_1, \ldots, f_r$ in $n^2$ indeterminates over an algebraically 
closed field $K$ defines an algebraic group. More precisely, let 
$V(f_1, \ldots, f_r) = \{ v \in K^{n \times n} \mid f_1(v) = \ldots = 
f_r(v) = 0 \}$ denote the variety defined by the polynomials $f_1, \ldots, 
f_r$ and denote $V^*(f_1, \ldots, f_r) = V(f_1, \ldots, f_r) \cap GL(n, K)$.
Our algorithm decides if $V^*(f_1, \ldots, f_r)$ forms a group under matrix 
multiplication.

\section{Notation and Preliminaries}

Throughout, let $n$ be a positive integer, and $K$ be a field.  
Let $x = (x_1, \ldots, x_{n^2})$ be a list of commuting indeterminates.
For brevity, we shall write $K[x]$ to mean the polynomial ring 
$K[x_1, \ldots, x_{n^2}]$. Similarly, if $y = (y_1, \ldots, y_{n^2})$ 
is another list of commuting indeterminates, we shall write $K[x,y]$ to 
mean $K[x_1,\ldots,x_{n^2}, y_1,\ldots,y_{n^2}]$, and so on. 

We write $K^{n \times n}$ to denote the algebra of $n \times n$ matrices 
over $K$.  There is a natural isomorphism of vector spaces
\[ K^{n^2} \ra K^{n \times n} 
   :\quad (v_1, \ldots, v_{n^2}) \quad\ms\quad
       \left( \begin{array}{ccc}
        v_1 & \ldots & v_n \\
        \vdots &     & \vdots \\
        v_{n^2-n+1}     & \ldots & v_{n^2} 
       \end{array} \right ). \]
We shall frequently identify vectors in $K^{n^2}$ with matrices in 
$K^{n \times n}$ via this isomorphism. In particular, this isomorphism 
lets us evaluate a polynomial $f \in K[x]$ at a matrix $v \in 
K^{n \times n}$. Similarly, we can consider any point in $V(I)$ as a matrix.

\section{Varieties}
\label{extpro}
 
Let $K$ be a field, let $x = (x_1, \ldots, x_{n^2})$ be a list of commuting
indeterminates over $K$, let $f_1, \ldots, f_r \in K[x]$ and let $I = (f_1,
\ldots, f_r) \unlhd K[x]$. A first obstacle in computations with $V^*(I)$
is that this set is defined as an intersection of a variety and a group and 
this makes it difficult to apply methods from algebraic geometry or group 
theory directly. 

As a first step in this section we show how $V^*(I)$ can be identified with 
a variety. For 
this purpose we consider the list $x$ as a matrix and thus define $det(x)$ 
as a polynomial in $K[x]$. Let $x_0$ be another indeterminate over $K$ and 
write $\hat{x} = (x_0, x_1, \ldots, x_{n^2})$. Define $f_0 = x_0 det(x) -1 
\in K[\hat{x}]$ and 
\[ \hat{I} = I + (f_0) \unlhd K[\hat{x}].\]

We write the elements of $V(\hat{I})$ as $(v_0, v)$ with $v_0 \in K$ and
$v \in K^{n \times n}$ and thus we consider $V(\hat{I})$ as subset of 
$K \oplus K^{n \times n}$. We define a componentwise multiplication on 
$K \oplus K^{n \times n}$ via 
\[ (v_0, v) (u_0, u) = (v_0 u_0, vu). \]

\begin{lemma}
\label{identify}
Let $K$ be an arbitrary field and $I \unlhd K[x]$.
\begin{items}
\item[\rm (a)]
The projection $\zeta : K \oplus K^{n \times n} \ra K^{n \times n} : 
(v_0, v) \ms v$ induces an bijection between $V(\hat{I})$ and $V^*(I)$.
\item[\rm (b)]
$V^*(I)$ is closed under multiplication (inversion) if and only if
$V(\hat{I})$ is closed under multiplication (inversion).
\end{items}
\end{lemma}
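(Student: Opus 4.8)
The plan is to prove the two parts in sequence, with part (a) doing most of the work and part (b) following almost for free once the bijection is understood concretely.

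For part (a), first I would exhibit the candidate inverse map explicitly. Given $v \in V^*(I)$, the matrix $v$ is invertible, so $\det(v) \neq 0$, and I set $\sigma(v) = (\det(v)^{-1}, v)$. I would then check that $\sigma(v) \in V(\hat I)$: since $v \in V(I)$ we have $f_i(v) = 0$ for all $i$, and by construction $f_0(\det(v)^{-1}, v) = \det(v)^{-1}\det(v) - 1 = 0$, so all generators of $\hat I$ vanish at $(\det(v)^{-1}, v)$, hence it lies in $V(\hat I)$. Conversely, given $(v_0, v) \in V(\hat I)$, the relation $f_0(v_0, v) = 0$ forces $v_0 \det(v) = 1$, so $\det(v)$ is a unit and $v \in GL(n,K)$; together with $f_i(v) = 0$ this gives $v \in V^*(I)$, so $\zeta$ does map $V(\hat I)$ into $V^*(I)$. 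Finally I would check $\zeta \circ \sigma = \mathrm{id}_{V^*(I)}$, which is immediate, and $\sigma \circ \zeta = \mathrm{id}_{V(\hat I)}$: for $(v_0, v) \in V(\hat I)$, the equation $v_0 \det(v) = 1$ together with $\det(v)$ being a unit pins down $v_0 = \det(v)^{-1}$ uniquely, so $\sigma(\zeta(v_0,v)) = (\det(v)^{-1}, v) = (v_0, v)$. Thus $\zeta$ and $\sigma$ are mutually inverse bijections.

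For part (b), the key observation is that $\zeta$ is not merely a bijection but is compatible with the componentwise multiplication: $\zeta((v_0,v)(u_0,u)) = \zeta(v_0 u_0, vu) = vu = \zeta(v_0,v)\,\zeta(u_0,u)$, and likewise the inverse map $\sigma$ satisfies $\sigma(vu) = (\det(vu)^{-1}, vu) = (\det(v)^{-1}\det(u)^{-1}, vu) = (\det(v)^{-1},v)(\det(u)^{-1},u) = \sigma(v)\sigma(u)$, using multiplicativity of the determinant. Hence $\zeta$ restricts to a bijection $V(\hat I) \to V^*(I)$ that intertwines the two multiplications. From this, closure under multiplication transfers both ways: if $V^*(I)$ is closed under multiplication and $(v_0,v),(u_0,u) \in V(\hat I)$, then $vu \in V^*(I)$, so $(v_0,v)(u_0,u) = \sigma(vu) \in V(\hat I)$; the reverse direction is symmetric via $\zeta$. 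The argument for inversion is identical: note $\sigma(v^{-1}) = (\det(v), v^{-1}) = (\det(v)^{-1}, v)^{-1} = \sigma(v)^{-1}$ (the first component of the inverse in $K \oplus K^{n\times n}$ being the field inverse), so $\sigma$ and $\zeta$ respect inverses, and closure under inversion transfers accordingly.

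I do not expect a genuine obstacle here; the only point requiring a little care is making sure that in $K \oplus K^{n \times n}$ the multiplication is componentwise and that the first component is a field element (so its "inverse" is the field inverse when it is nonzero), which is exactly what the setup provides, and that $v_0$ is forced to be nonzero by $f_0 = 0$ before one can speak of $\det(v)^{-1}$. Everything else is bookkeeping with the identity $v_0 \det(v) = 1$ and the multiplicativity of $\det$.
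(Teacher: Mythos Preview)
Your proof is correct and follows essentially the same approach as the paper. The paper proves (a) by showing injectivity of $\zeta$ (via the forced equality $v_0 = \det(v)^{-1}$) and identifying the image as $V^*(I)$, while you package the same observation into an explicit inverse map $\sigma(v) = (\det(v)^{-1}, v)$; for (b) both arguments rest on the multiplicativity of $\zeta$, with your version spelling out the details (including multiplicativity of $\sigma$ via $\det(vu) = \det(v)\det(u)$) that the paper leaves implicit.
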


\begin{proof}
(a) Let $(v_0, v), (w_0, w) \in V(\hat{I})$ with $\zeta((v_0,v)) =
\zeta((w_0,w))$. Then $v=w$ and thus $v_0 = det(v)^{-1} = det(w)^{-1}
= w_0$. Hence $(v_0, v) = (w_0, w)$ and $\zeta$ is injective on 
$V(\hat(I))$. By the construction of $\hat{I}$, the image of $\zeta$
coincides with the invertible elements in $V(I)$ and thus with 
$V^*(I)$. \\
(b) The map $\zeta$ is compatible with multiplication and thus 
(b) follows directly from (a). 
\end{proof}

As a second step in this section we exhibit how one can readily decide 
if $V(I)$ and $V^*(I)$ are equal. The following lemma is elementary and
can be proved readily using Hilbert's Nullstellensatz.

\begin{lemma}
\label{identical}
Let $K$ be an algebraically closed field, let $I \unlhd K[x]$ and 
$J = (I, \det(x)) \unlhd K[x]$.
\begin{items}
\item[\rm (a)] 
Then $V(J) \subseteq V(I)$ and $V^*(I)$ is the set-complement to
$V(J)$ in $V(I)$.
\item[\rm (b)]
$V(I) = V^*(I)$ if and only if $1 \in J$.
\end{items}
\end{lemma}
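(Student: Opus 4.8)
The plan is to prove both parts as direct consequences of Hilbert's Nullstellensatz, exploiting the fact that for a variety $v \in K^{n\times n}$, the condition $v \notin GL(n,K)$ is equivalent to $\det(v) = 0$, i.e.\ to $v$ lying on the hypersurface $V(\det(x))$.

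For part (a): first I would observe that $V(J) = V(I) \cap V(\det(x))$ by the general fact that $V(I_1 + I_2) = V(I_1) \cap V(I_2)$; since $J = (I, \det(x)) = I + (\det(x))$, this immediately gives $V(J) \subseteq V(I)$. Next, a point $v \in V(I)$ lies in $V(J)$ precisely when $\det(v) = 0$, i.e.\ precisely when $v \notin GL(n,K)$, i.e.\ precisely when $v \notin V^*(I)$. Hence $V^*(I) = V(I) \setminus V(J)$ is the set-complement of $V(J)$ in $V(I)$, as claimed.

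For part (b): the forward direction is the substantive one. If $V(I) = V^*(I)$, then by part (a) the set $V(J)$ is the complement of $V^*(I) = V(I)$ inside $V(I)$, so $V(J) = \emptyset$. Since $K$ is algebraically closed, the weak Nullstellensatz gives $1 \in J$ (a proper ideal would have a nonempty variety). Conversely, if $1 \in J$ then $V(J) = \emptyset$, and by part (a) we get $V^*(I) = V(I) \setminus \emptyset = V(I)$.

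The only mild subtlety — hardly an obstacle — is making sure the hypothesis that $K$ be algebraically closed is invoked exactly where it is needed, namely in the weak Nullstellensatz step of the forward direction of (b); everything else, including all of part (a) and the reverse direction of (b), holds over an arbitrary field. I would also note in passing that part (b) gives an effective test: one computes a Gröbner basis of $J$ and checks whether it equals $\{1\}$.
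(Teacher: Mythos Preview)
Your proof is correct and follows exactly the approach the paper indicates: the paper does not spell out a proof but simply remarks that the lemma ``is elementary and can be proved readily using Hilbert's Nullstellensatz,'' and your argument is precisely the natural elaboration of that remark. Your additional observation that algebraic closure is needed only for the forward direction of (b), via the weak Nullstellensatz, is accurate and a nice sharpening.
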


Note that both cases $V(I) = V^*(I)$ and $V(I) \neq V^*(I)$ can occur.
For example, if $I = \ideal{\det(x)-1}$, then $V(I) = V^*(I) = SL(n,K)$.
In contrast,
if $I = \ideal{0}$, then $V(I) = K^{n \times n}$ while $V^*(I) = GL(n,K)$,
and thus $V(I) \neq V^*(I)$.

\section{Closedness of $V^*(I)$ under inversion}
\label{secinv}

In this section we show how to decide if $V^*(I)$ is closed under 
inversion.  By definition, every element in $V^*(I)$ is invertible 
in $GL(n,K)$.  It remains to check if the inverses are contained 
in $V^*(I)$. 

Consider the list of indeterminates $x$ as a matrix in $K[x]^{n \times n}$.
Then there exists a formal inverse $N(x)$ for the matrix $x$. The matrix
$N(x)$ is an element of $K(x)^{n \times n}$ and it can be determined via 
the classical adjoint. Each entry of $N(x)$ has the form $g_{ij}(x)/det(x)$ 
for a polynomial $g_{ij}(x) \in K[x]$.

\begin{theorem} \label{inv}
Let $K$ be an algebraically closed field and let $I = (f_1, \ldots, f_r) 
\unlhd K[x]$. For $1 \leq i \leq r$ there exist $h_i(x) \in K[x]$ and 
$l_i \in \N_0$ with $f_i(N(x)) = h_i(x)/det(x)^{l_i}$. Write $k_i(x) = 
h_i(x) det(x) \in K[x]$. Then the following are equivalent.
\begin{items}
\item[\rm (A)]
$V^*(I)$ is closed under inversion.
\item[\rm (B)]
$h_i(v) = 0$ for $1 \leq i \leq r$ and for every $v \in V^*(I)$.
\item[\rm (C)]
$k_i(v) = 0$ for $1 \leq i \leq r$ and for every $v \in V(I)$.
\item[\rm (D)]
$k_i \in \sqrt{I}$ for $1 \leq i \leq r$.
\item[\rm (E)]
$h_i \in \sqrt{\hat{I}}$ for $1 \leq i \leq r$.
\end{items}
\end{theorem}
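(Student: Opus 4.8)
The plan is to prove the cycle of implications $(A) \Rightarrow (B) \Rightarrow (C) \Rightarrow (D) \Rightarrow (E) \Rightarrow (A)$, relying throughout on the Nullstellensatz (since $K$ is algebraically closed) and on Lemma~\ref{identify}.

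First, $(A) \Rightarrow (B)$: if $v \in V^*(I)$ then $v^{-1} \in V^*(I)$ by hypothesis, and since $v^{-1}$ is obtained by evaluating the formal inverse $N(x)$ at $v$ (the denominators $\det(v)$ being nonzero on $V^*(I)$), we get $0 = f_i(v^{-1}) = f_i(N(v)) = h_i(v)/\det(v)^{l_i}$, hence $h_i(v) = 0$. For $(B) \Rightarrow (C)$: take any $v \in V(I)$. If $v \in V^*(I)$, then $h_i(v) = 0$ by (B), so $k_i(v) = h_i(v)\det(v) = 0$; if $v \notin V^*(I)$ then $\det(v) = 0$ by Lemma~\ref{identical}(a), so again $k_i(v) = h_i(v)\det(v) = 0$. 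This is exactly the point of multiplying by the extra factor $\det(x)$ — it kills the singular locus without disturbing behaviour on $V^*(I)$. The step $(C) \Rightarrow (D)$ is a direct application of the (strong) Nullstellensatz: $k_i$ vanishes on all of $V(I)$, hence $k_i \in \mathcal{I}(V(I)) = \sqrt{I}$.

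For $(D) \Rightarrow (E)$: from $k_i = h_i \det(x) \in \sqrt{I} \subseteq \sqrt{\hat I}$ (the inclusion since $I \subseteq \hat I$), and using that $f_0 = x_0\det(x) - 1 \in \hat I$ makes $\det(x)$ a unit modulo $\hat I$, I conclude $h_i \in \sqrt{\hat I}$: concretely, $x_0 k_i = x_0 h_i \det(x) = h_i(x_0\det(x)-1) + h_i \equiv h_i \pmod{\hat I}$, so $h_i \equiv x_0 k_i$, and $x_0 k_i$ lies in $\sqrt{\hat I}$ because $k_i$ does. Finally $(E) \Rightarrow (A)$: let $g \in V^*(I)$; by Lemma~\ref{identify}(a) the point $(\det(g)^{-1}, g)$ lies in $V(\hat I)$. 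Since $h_i \in \sqrt{\hat I}$, it vanishes at this point, giving $h_i(g) = 0$, hence $f_i(g^{-1}) = f_i(N(g)) = h_i(g)/\det(g)^{l_i} = 0$ for all $i$, so $g^{-1} \in V(I)$; as $g^{-1}$ is invertible, $g^{-1} \in V^*(I)$.

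I expect the main subtlety to be bookkeeping with the denominators $\det(x)^{l_i}$ and the passage between the rings $K[x]$ and $K[\hat x]$ — in particular making the argument in $(D) \Rightarrow (E)$ precise, i.e.\ that inverting $\det(x)$ via the auxiliary variable $x_0$ is compatible with taking radicals. None of the steps is deep; the content is entirely in having set up $\hat I$ and the polynomials $k_i$ correctly so that the Nullstellensatz applies cleanly, and everything else is a routine chase through evaluations.
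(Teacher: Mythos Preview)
Your proof is correct. The paper organises the equivalences slightly differently: it proves the two biconditionals $(C)\Leftrightarrow(D)$ and $(B)\Leftrightarrow(E)$ directly by the Nullstellensatz (the latter via the bijection of Lemma~\ref{identify}), and then closes the short cycle $(A)\Rightarrow(B)\Rightarrow(C)\Rightarrow(A)$. Your single five-cycle covers the same ground; the only genuinely different step is your $(D)\Rightarrow(E)$, where you argue purely algebraically that $h_i\equiv x_0 k_i\pmod{\hat I}$ (using that $\det(x)$ is a unit modulo $\hat I$), whereas the paper reaches $(E)$ from $(B)$ geometrically through $V(\hat I)$. Both routes are equally short; your algebraic step has the mild advantage of making explicit why the extra variable $x_0$ lets one strip off the $\det(x)$ factor inside the radical, while the paper's organisation makes the parallel roles of $(D)$ and $(E)$ as Nullstellensatz translations of $(C)$ and $(B)$ more visible.
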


\begin{proof}
Let $J = (k_1, \ldots, k_r) \unlhd K[x]$. Then Condition (C) is equivalent 
to $V(I) \subseteq V(J)$ and this, in turn, is equivalent to $J \subseteq 
\sqrt{I}$ by Hilbert's Nullstellensatz. Hence (C) and (D) are equivalent. 
Similarly, (B) and (E) are equivalent via Lemma \ref{identify}.

\smallskip
\noindent
(A) $\Ra$ (B):
Let $v \in V^*(I)$, so $\det(v) \neq 0$.  Put $w = v^{-1}$.
By construction $w = N(v)$, and by (A) we have $w \in V^*(I)$.
Hence for each $i=1,\ldots,r$ we have $0 = f_i(w) = f_i(N(v)) = 
h_i(v)/\det(v)^{l_i}$, whence $h_i(v) = 0$.

\smallskip
\noindent
(B) $\Ra$ (C):
Let $i \in \{1, \ldots, r\}$ and $v \in V(I)$.  Then either $v \in V^*(I)$
and thus $h_i(v) = 0$ by (B), or $\det(v) = 0$. In either case, it follows 
that $k_i(v) = \det(v) \, h_i(v) = 0$ as desired.

\smallskip
\noindent
(C) $\Ra$ (A):
Let $v \in V^*(I)$, so $\det(v) \neq 0$.  Now, $w = v^{-1}$ exists as 
an element of $GL(n,K)$, and $w = N(v)$.  By (C) it follows that $0 = 
k_i(v) = \det(v) \, h_i(v) = \det(v) \, f_i(N(v)) = \det(v) \, f_i(w)$ 
for each $1 \leq i \leq r$. Whence each $f_i(w) = 0$ for $1 \leq i \leq r$.
Thus $w \in V(I)$. As $w$ is invertible, it follows that $w \in V^*(I)$. 
\end{proof}

Both, Theorem \ref{inv} (D) and (E) translate directly to an algorithm for 
checking if every element of $V^*(I)$ is invertible inside $V^*(I)$. 

\section{Closedness of $V^*(I)$ under multiplication}
\label{secmul1}

Now we introduce a method to decide if $V^*(I)$ is closed under multiplication.
Recall that $\hat{x} = (x_0, \ldots, x_{n^2})$ and let $\hat{y} = (y_0, 
\ldots, y_{n^2})$. Define $\varphi : K[\hat{x}] \ra K[\hat{y}] : x_i \ms
y_i$ and
\[ \hat{I}_{xy} = \hat{I} + \varphi(\hat{I}) \unlhd K[\hat{x},\hat{y}].\]

\begin{theorem} \label{mult1}
Let $K$ be an algebraically closed field. Then the following are 
equivalent.
\begin{items}
\item[\rm (A)]
$V^*(I)$ is closed under multiplication.
\item[\rm (B)]
$f_i(vw) = 0$ for $1 \leq i \leq r$ and for every $v,w \in V^*(I)$.
\item[\rm (C)]
$f_i(xy) \in \sqrt{\hat{I}_{xy}}$ for $1 \leq i \leq r$.
\end{items}
\end{theorem}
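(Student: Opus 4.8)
The plan is to follow the same template as the proof of Theorem~\ref{inv}, reducing the three statements to each other via Lemma~\ref{identify} and Hilbert's Nullstellensatz. First I would establish the equivalence of (B) and (C). The key observation is that, by Lemma~\ref{identify}(a), the bijection $\zeta$ lets us replace ``for every $v,w \in V^*(I)$'' in (B) with ``for every $(v_0,v),(w_0,w) \in V(\hat{I})$''. Now a pair $\bigl((v_0,v),(w_0,w)\bigr)$ ranges exactly over $V(\hat{I}_{xy}) \subseteq K[\hat{x},\hat{y}]$, viewed inside $K \oplus K^{n\times n} \oplus K \oplus K^{n\times n}$, since $\hat{I}_{xy} = \hat{I} + \varphi(\hat{I})$ imposes the $\hat{I}$-conditions on the $\hat{x}$-block and on the $\hat{y}$-block independently. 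Under the natural identification the product $vw$ is the matrix obtained from the list $xy$ (entrywise matrix product of the two blocks of indeterminates), so (B) says precisely that the polynomial $f_i(xy) \in K[\hat x,\hat y]$ vanishes on all of $V(\hat{I}_{xy})$. By Hilbert's Nullstellensatz (here we use that $K$ is algebraically closed), this vanishing is equivalent to $f_i(xy) \in \sqrt{\hat{I}_{xy}}$, which is (C). Hence (B) $\Leftrightarrow$ (C).

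Next I would prove (A) $\Rightarrow$ (B) and (B) $\Rightarrow$ (A), which together with the above give the full cycle. For (A) $\Rightarrow$ (B): take any $v,w \in V^*(I)$; then $vw \in GL(n,K)$ since both factors are invertible, and by (A) we have $vw \in V^*(I) \subseteq V(I)$, so $f_i(vw) = 0$ for all $i$. For (B) $\Rightarrow$ (A): take $v,w \in V^*(I)$; their product $vw$ is invertible (determinants are nonzero and multiplicative), and (B) gives $f_i(vw) = 0$ for each $i$, so $vw \in V(I)$; being invertible, $vw \in V^*(I)$. Thus $V^*(I)$ is closed under multiplication.

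The one point that deserves care — and the step I expect to be the main technical nuisance rather than a deep obstacle — is the precise bookkeeping in the (B) $\Leftrightarrow$ (C) argument: one must check that $V(\hat{I}_{xy})$, as a subset of $K^{2(n^2+1)}$, really is the set of pairs $\bigl((v_0,v),(w_0,w)\bigr)$ with $(v_0,v),(w_0,w) \in V(\hat{I})$, and that evaluating the polynomial $f_i(xy)$ (where $xy$ denotes the list of $n^2$ entries of the product of the two matrices of indeterminates) at such a point yields exactly $f_i(vw)$. The first of these is immediate because the generators of $\hat{I}$ in the $\hat x$-variables and those of $\varphi(\hat{I})$ in the $\hat y$-variables involve disjoint sets of variables, so their common zero set is the product of the two individual zero sets; the second is just the definition of the identification of $K^{n^2}$ with $K^{n\times n}$ being multiplicative on these coordinates. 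Everything else is a verbatim repetition of the argument pattern already used for Theorem~\ref{inv}, with Lemma~\ref{identify} supplying the bridge between $V^*(I)$ and $V(\hat{I})$ and the Nullstellensatz converting ``vanishes on the variety'' into ``lies in the radical''.
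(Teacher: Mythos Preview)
Your proposal is correct and follows essentially the same approach as the paper: the paper's proof is a two-sentence sketch that reduces (C) to the containment $V(\hat{I}_{xy}) \subseteq V(J)$ with $J=(f_1(xy),\ldots,f_r(xy))$ via the Nullstellensatz and then declares the remaining equivalences easy, and your write-up simply spells out those ``easy'' steps in full (the identification of $V(\hat{I}_{xy})$ with pairs from $V(\hat I)\times V(\hat I)$, and the direct equivalence of (A) and (B)). One trivial slip: where you write ``$V(\hat{I}_{xy}) \subseteq K[\hat{x},\hat{y}]$'' you mean the ambient affine space $K^{2(n^2+1)}$, not the polynomial ring, as you yourself write a few lines later.
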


\begin{proof}
Note that (C) is equivalent to $V(\hat{I}_{xy}) \subseteq V(J)$ with
$J = (f_1(xy), \ldots, f_r(xy)) \unlhd K[\hat{x}, \hat{y}]$ by Hilbert's 
Nullstellensatz. Using this equivalence it is easy to show that the 
three statements are equivalent.
\end{proof}

\section{Deciding if $V^*(I)$ is a group}

Let $1_n$ denote the $n \times n$ identity matrix. The set $V^*(I)$ is a 
group if  $1_n \in V^*(I)$, $V^*(I)$ is closed under inversion and $V^*(I)$
is closed under multiplication. This can checked now in the following steps.
Let $K$ be an algebraically closed field and recall that $I = (f_1, \ldots,
f_r)$.
\bigskip

{\bf Algorithm 'IsGroup'} \\
For $1 \leq i \leq r$ do
\begin{items}
\item[(1)]
Check that $f_i(1_n) = 0$; if not, then return false.
\item[(2)]
Check that $f_i(N(x)) \in \sqrt{\hat{I}}$; if not, then return false.
\item[(3)]
Check that $f_i(xy) \in \sqrt{\hat{I}_{xy}}$; if not, then return false.
\end{items}
\bigskip

Note that checking membership in a radical of an ideal can be done with
the trick of Rabinowitch and does not require to determine the radical
explicitly.

We mention a variation of the above algorithm which is based on the
following theorem. Its proof is similar to the proof of Theorem
\ref{mult1} and we omit it here.

\begin{theorem} \label{mult2}
Let $K$ be an algebraically closed field and suppose that the identity
matrix is contained in $V(I)$. Then the following are equivalent.
\begin{items}
\item[\rm (A)]
$V^*(I)$ is a group.
\item[\rm (B)]
$f_i(vw^{-1}) = 0$ for $1 \leq i \leq r$ and for every $v,w \in V^*(I)$.
\item[\rm (C)]
$f_i(x N(y)) \in \sqrt{\hat{I}_{xy}}$ for $1 \leq i \leq r$.
\end{items}
\end{theorem}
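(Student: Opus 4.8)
The plan is to mimic the proof of Theorem~\ref{mult1}, combining it with Theorem~\ref{inv} and the hypothesis that $1_n \in V(I)$. First I would observe that, since $1_n \in V(I)$ and $1_n$ is invertible, we have $1_n \in V^*(I)$; hence by Lemma~\ref{identify} the point $(1, 1_n)$ lies in $V(\hat I)$, so $V^*(I)$ is nonempty and contains an identity element for matrix multiplication. This is the ingredient that lets the single condition $f_i(vw^{-1})=0$ capture both closure properties at once.

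Next I would prove (A) $\Ra$ (B): if $V^*(I)$ is a group then for any $v,w \in V^*(I)$ we have $w^{-1} \in V^*(I)$ and hence $vw^{-1} \in V^*(I) \subseteq V(I)$, so every $f_i$ vanishes there. For (B) $\Ra$ (A) I would argue in two stages. Taking $v = 1_n$ in (B) shows $f_i(w^{-1}) = 0$ for every $w \in V^*(I)$; since $w^{-1}$ is invertible this gives $w^{-1} \in V^*(I)$, so $V^*(I)$ is closed under inversion. Then for arbitrary $v, u \in V^*(I)$, apply (B) with $w = u^{-1}$, which lies in $V^*(I)$ by what we just showed, to get $f_i(vu) = f_i(v(u^{-1})^{-1}) = 0$; as $vu$ is invertible, $vu \in V^*(I)$, so $V^*(I)$ is closed under multiplication. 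Together with $1_n \in V^*(I)$ this makes $V^*(I)$ a group. The equivalence of (B) and (C) follows exactly as in Theorem~\ref{mult1}: by Hilbert's Nullstellensatz, condition (C) says $V(\hat I_{xy}) \subseteq V(J)$ where $J = (f_1(xN(y)), \ldots, f_r(xN(y)))$, and via Lemma~\ref{identify} the points of $V(\hat I_{xy})$ correspond to pairs $(v,w) \in V^*(I) \times V^*(I)$, on which $x N(y)$ evaluates to $v w^{-1}$ because $N(y)$ is the formal inverse and $w$ is invertible.

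The one point requiring a little care — the ``main obstacle'', such as it is — is the bookkeeping around the rational function $N(y)$: its entries are $g_{ij}(y)/\det(y)$, so $f_i(xN(y))$ is a priori only an element of $K(\hat x, \hat y)$, not a polynomial. As in Theorem~\ref{inv}, one must clear denominators, writing $f_i(xN(y)) = \tilde h_i(\hat x,\hat y)/\det(y)^{m_i}$ with $\tilde h_i \in K[\hat x, \hat y]$, and phrase condition (C) in terms of these numerators (or, equivalently, note that on $V(\hat I_{xy})$ one has $y_0 \det(y) = 1$, so $\det(y)$ is invertible and the rational expression is legitimately defined and equals $vw^{-1}$). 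Once this identification of points and evaluations is in place, the argument is routine, and I would keep it brief, referring back to the proofs of Theorems~\ref{inv} and~\ref{mult1} for the repeated Nullstellensatz and Lemma~\ref{identify} steps rather than reproducing them.
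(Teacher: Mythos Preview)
Your proposal is correct and matches the paper's intent: the paper omits the proof entirely, saying only that it is ``similar to the proof of Theorem~\ref{mult1}'', which is precisely the route you take. Your handling of the denominator issue with $N(y)$ and the use of $1_n \in V^*(I)$ to derive closure under inversion from (B) are the right details to fill in.
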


Hence the above algorithm has the following variation.
\bigskip

{\bf Algorithm 'IsGroup'} \\
For $1 \leq i \leq r$ do
\begin{items}
\item[(1)]
Check that $f_i(1_n) = 0$; if not, then return false.
\item[(2)]
Check that $f_i(x N(y)) \in \sqrt{\hat{I}_{xy}}$; if not, then return false.
\end{items}
\bigskip

There are further variations of this method possible in special cases.
For example, if $V(I) = V^*(I)$, then it is not necessary to use the
ideal $\hat{I}$, but $I$ can be used directly instead.

\section{Examples}
\label{examples}

If $V(I)$ is closed under multiplication, then $V^*(I)$ is also closed 
under multiplication.  However, the converse does not necessarily hold, 
as the following example shows.

\begin{exam}
Let $n = 2$ and consider $f_1 = x_3$ and $f_2 = x_2 (x_2 x_4 - 1)$
and $f_3 = x_1 x_2$.
Then 
\[ V(I) = \left\{ 
    \left( \begin{array}{cc} a & 0 \\ 0 & b \end{array} \right) \mid
    a, b \in K \right\}
\quad\bigcup\quad
\left\{ 
    \left( \begin{array}{cc} 0 & c \\ 0 & c^{-1} \end{array} \right)  \mid
    c \in K \setminus \{0\} \right\} \]
and
\[ V^*(I) = \left\{ 
    \left( \begin{array}{cc} a & 0 \\ 0 & b \end{array} \right) \mid
    a, b \in K \setminus \{0\} \right\}. \]
Thus $V^*(I)$ is closed under multiplication, while $V(I)$ is not.
\end{exam}

Our approach via ideals and varieties requires to work over an algebraically
closed field $K$. In various applications it would be of interest to consider 
$V(I) \cap GL(n, k)$ where $k$ is not necessarily algebraically closed.
Clearly, if $V(I)$ is a group, then $V(I) \cap GL(n, k)$ is also a group.  
The converse is not true, as the following example shows.

\begin{exam}
With $n = 1$, consider $f_1 = (x_1 -1)(x_1^2-2)$ and let $I = 
\ideal{f_1} \,\unlhd\, \C[x_1]$.  Then 
\begin{items}
\bulit 
$V(I) = \{ (1), (\sqrt{2}), (-\sqrt{2})\}$ which is not a group.
\bulit 
$V(I) \cap GL(n, \Q) = \{ (1) \}$ which is a group.
\end{items}
\end{exam}

If $k$ is a finite field (of cardinality $q$) then we can restrict to
$V(I) \cap GL(n, k)$ by adding the polynomial conditions $x_i^q-x_i=0$
for each $i=1,\ldots,n^2$. 

\begin{exam}
Let $\F_q$ denote the field with $q$ elements and let $f_1 = (x_1 -1)(x_1^2-2)
\in \F_5[x_1]$. For $t \in \N_0$ Consider 
\[ I_t = (f_1, x_1^{5^t}-x_1) \unlhd \F_5[x_1].\]
Then $V(I_t) = \{ (1) \} \subset \F_{5^t}$ if $t$ is odd, and $V(I_t) = 
\{ (1), (\sqrt{2}), (-\sqrt{2}) \} \subset \F_{5^t}$ if $t$ is even.  
Thus $f_1$ defines a group iff we restrict to a field of cardinality 
$5^t$ with $t$ odd.
\end{exam}

Further we exhibit some sample applications of our methods. We used the
CoCoA implementation of our methods to check if $V^*(I)$ is a group in
these examples.

\begin{exam}
Choose $n = 3$.
\begin{items}
\item[\rm (1)]
Consider 
\begin{eqnarray*}
f_1 &=&
  850x_1-475x_2-50x_3+1496x_4-836x_5-88x_6+238x_7-133x_8-14x_9,  \\
f_2 &=&
  125x_1-75x_2+25x_3+220x_4-132x_5+44x_6+35x_7-21x_8+7x_9 
\end{eqnarray*}
and let $I = (f_1, f_2)$. Then $V^*(I)$ is a group.
\item[\rm (2)]
Consider
\begin{eqnarray*}
f_1 &=& -3*x_1+x_3-9*x_7+3*x_9, \\
f_2 &=& 52*x_1-16*x_3+169*x_7-52*x_9, \\
f_3 &=& 3*x_4-x_6,
\end{eqnarray*}
and let $I = (f_1, f_2, f_3)$. Then $1_3 \in V(I)$ holds and 
$V(I) \neq V^*(I)$ can be readily observed. Further, $V^*(I)$ is not
a group, as it is not closed under inverses.
\item[\rm (3)]
Consider
\begin{eqnarray*}
f_1 &=& 22*x_1+77*x_2-6*x_4-21*x_5+48*x_7+168*x_8, \\
f_2 &=& 2*x_7+7*x_8, \\
f_3 &=& -14*x_1-49*x_2+4*x_4+14*x_5-28*x_7-98*x_8 \\
\end{eqnarray*}
and let $I = (f_1, f_2, f_3)$.  Then $1_3 \not \in V(I)$
and hence $V^*(I)$ is not a group.
\end{items}
\end{exam}

\begin{exam}
Choose $n = 2$ and consider
\begin{eqnarray*}
f_1 &=& (x_1-1)(x_1^2 + 1),\\
f_2 &=& x_2, \\
f_3 &=& x_3 \\
f_4 &=& x_4 - 1 
\end{eqnarray*}
and let $I = (f_1, f_2, f_3, f_4)$.  Then $V(I)$ is finite
and can be listed explicitly as
\[ V(I) = \left\{
  \left( \begin{array}{cc} 1 & 0 \\ 0 & 1 \end{array} \right), 
  \left( \begin{array}{cc} i & 0 \\ 0 & 1 \end{array} \right), 
  \left( \begin{array}{cc} -i & 0 \\ 0 & 1 \end{array} \right) \right\}. \]
Hence $1_2 \in V(I)$ and each element of $V(I)$ is invertible in $V(I)$
so that $V(I) = V^*(I)$ holds.  But $V(I)$ is not a group, as it is not 
closed under multiplication.
\end{exam}

\section{Comments}

We have implemented the algorithms described in the Section above in
CoCoA-5~\cite{CoCoA}; the implementations are publicly available in 
the CoCoA-5 package \texttt{ArithGroup.cpkg5} (distributed starting 
from CoCoA version 5.1.2).

The implementations are largely straightforward.  We observe that the
programs do not need to compute the radical of an ideal (a costly
operation). Our methods entail checking whether the homomorphic images of
certain polynomials lie in the radical of the ideal $\hat{I}$.
Observe that it suffices to compute a polynomial equivalent (modulo
$\hat{I}$) to the actual homomorphic image; in other words we may employ
``normal form'' reductions modulo $\hat{I}$ when convenient to lower
the overall cost of evaluating $f_i(h)$ modulo $\hat{I}$.

\def\cprime{$'$}


\begin{thebibliography}{1}

\bibitem{CoCoALib}
A.~B. J.~Abbott.
\newblock {\em CoCoALib: a C++ library for doing Computations in Commutative
  Algebra}.
\newblock URL \texttt{http://cocoa.dima.unige.it/cocoalib}.

\bibitem{CoCoA}
G.~L. J.~Abbott, A.M.~Bigatti.
\newblock {\em CoCoA-5: a system for doing Computations in Commutative
  Algebra}.
\newblock URL \texttt{http://cocoa.dima.unige.it/}.

\end{thebibliography}
\end{document}